\newcommand{\removelatexerror}{\let\@latex@error\@gobble}
\newtheorem{thm}{Theorem}
\newtheorem{corollary}{Corollary}
\tikzstyle{block} = [draw, rectangle, minimum size=3em]
\tikzstyle{bigblock} = [draw, rectangle, minimum height=7em, minimum width=11em]
\theoremstyle{remark}
\tikzset{>=latex}
\newenvironment{definition}[1][Definition]{\begin{trivlist}
		\item[\hskip \labelsep {\bfseries #1}]}{\end{trivlist}}
\newenvironment{problem}[1][Problem]{\begin{trivlist}
		\item[\hskip \labelsep {\bfseries #1}]}{\end{trivlist}}
\theoremstyle{exampstyle}
\title{\LARGE\bf Switched Linear Systems Meet Markov Decision Processes:\\Stability Guaranteed Policy Synthesis}
\author{Bo~Wu, Murat Cubuktepe, and Ufuk Topcu
	\thanks{ Bo Wu, Murat Cubuktepe, and Ufuk Topcu are with the Department of Aerospace Engineering
and Engineering Mechanics, and the Oden Institute for Computational
Engineering and Sciences, University of Texas, Austin, 201 E 24th
St, Austin, TX 78712. email: {\tt\small $\{$bwu3, mcubuktepe, utopcu$\}$@utexas.edu}}}
\begin{document}
\maketitle

\begin{abstract}
Switched linear systems are time-varying nonlinear systems whose dynamics switch between different modes, where each mode corresponds to different linear dynamics.  They arise naturally to model unexpected failures, environment uncertainties or system noises during system operation. In this paper, we consider a special class of switched linear systems where the mode switches are governed by Markov decision processes (MDPs). We study the problem of synthesizing a policy in an MDP that stabilizes the switched system. Given a policy, the switched linear system becomes a Markov jump linear system whose stability conditions have been extensively studied. We make use of these stability conditions and propose three different computation approaches to find the stabilizing policy in an MDP. We derive our first approach by extending the stability conditions for a Markov jump linear system to handle switches governed by an MDP. This approach requires finding a feasible solution of a set of bilinear matrix inequalities, which makes policy synthesis typically challenging. To improve scalability, we provide two approaches based on convex optimization. We give three examples to show and compare our proposed solutions. 
\end{abstract}

\section{Introduction}
In recent years, we have witnessed an increasing research interest in switched linear systems \cite{liberzon2003switching,sun2006switched}, which consist of a set of subsystems (also known as modes) with linear dynamics and a switching logic that describes all the possible switches between modes. 
Such systems model engineering systems with multi-controllers, abrupt system parameter variations due to environmental uncertainties and sudden change in system structure because of system failures \cite{sun2005analysis}. 
Switched linear systems find its application in robotics \cite{zegers2018distributed}, wireless sensor networks \cite{zhang2017energy}, networked control systems \cite{cetinkaya2018analysis}, security and privacy \cite{wu2018privacy}.

Generally speaking, in switched systems there are two kinds of switching logic, namely autonomous and controlled ones  \cite{zhang2008l_}. The former could be the result of system's own characteristics or the environment and the latter is caused by designer's objectives. In this paper, we introduce a new system modeling framework for switched linear systems, where the switching logic is governed by both autonomous and controlled factors characterized by a Markov decision process (MDP) \cite{puterman2014markov}. 

The switching logic characterized by an MDP consists of a set of modes, a set of actions controlled by the designer, and a  transition relation that defines the probability of transiting from the current mode to next mode when taking a particular action. 
Therefore, such a switching logic captures both the designer's control by the action selection and the environment uncertainties that result in probabilistic mode switches. 
For example, in a multi-agent system, each agent needs coordination to achieve some global agreement with locally available information subject to possible communication link failure and creation \cite{olfati2004consensus}. The designer may decide to switch among a finite set of possible formations in terms of relative distances between agents. The task here is to design a switching logic such that the agents are able to coordinate with each other to achieve a certain task.  However, each formation change may result in different network topology probabilistically due to uncertainties in wireless communication. Therefore, the system has to make its formation switch decisions wisely to remain stable.


The major objective of this paper is to synthesize a policy in an MDP such that the switched system is stable. Given a policy, an MDP reduces to a discrete time Markov chain (DTMC) and therefore, the switched linear system becomes a Markov jump linear system (MJLS) \cite{costa2006discrete}, where the modes in the system switch probabilistically following a DTMC. The stability conditions for an MJLS have been extensively studied, see e.g. \cite{costa2006discrete,bolzern2015positive,shi2015survey,saravanakumar2017stability}.

We first show that only considering deterministic and stationary policies in an MDP, which can achieve maximum expected reward \cite{puterman2014markov}, is not sufficient to stabilize the system. Based on different stability conditions for an MJLS, we introduce three different approaches to finding the stabilizing policy. The first approach extends the stability conditions for an MJLS whose switches are governed by an induced DTMC after applying a policy to an MDP. The approach relies on finding a policy and a Lyapunov function simultaneously that gives a certificate of the stability of a MJLS. It involves solving for a set of bilinear matrix inequalities, which are intractable to solve in general~\cite{vanantwerp2000tutorial}. We also provide a sufficient condition for computation of a policy that stabilizes the system based on semidefinite programming and coordinate descent, which can be solved more efficiently in polynomial time using interior point methods~\cite{nesterov1994interior}. The sufficient condition based on semidefinite programming involves searching for a diagonal Lyapunov function that guarantees the stability. As it is only a sufficient condition, we propose another approach based on coordinate descent~\cite{razaviyayn2013unified,shen2017disciplined}. In each step, we update the variables with in the coordinate descent method to improve the convergence. Our experiments show that coordinate descent method outperforms the semidefinite relaxation and directly solving for the bilinear matrix inequalities. 

The rest of this paper is organized as follows. Section \ref{sec:preliminaries} introduces our modeling framework and necessary definitions. Section \ref{sec:Problem Formulation} formulates our policy synthesis problem. Three different solutions are proposed in Section \ref{sec:Stability Guaranteed Policy Synthesis} with respect to different stability conditions. Section \ref{sec:Examples} provides three examples to show the validity of our proposed solutions and compare their performances. Section \ref{sec:Conclusion} concludes the paper and discusses future directions.


\emph{Notation:} $|S|$ denotes the cardinality of a set $S$. Given a real matrix $A\in\mathbb{R}^{m\times n}$, $A'$ denotes its transpose. If $m=n$, $\rho(A)$ represents the spectral radius of $A$, i.e., $\rho(A)=\max_i|\lambda_i|$ where $\lambda_i,i\in\{1,\ldots,n\}$ are eigenvalues of $A$. $A>0$ ($A\geq 0$) denotes that the matrix $A$ is positive definite (positive semidefinite). $E[.]$ stands for computing the expectation. $\otimes$ denotes the Kronecker product. 
For $A_i\in\mathbb{R}^{n\times n},i\in\{1,\ldots,n\}$, we set $diag(A_i)\in\mathbb{R}^{Nn\times Nn}$ the block diagonal matrix formed with $A_i$ at the diagonal and zero anywhere else, i.e.
$$
diag(A_i)=\begin{bmatrix}
    A_1 & 0 & 0  \\
    0 & \ddots & 0\\
    0 & 0 & A_N
\end{bmatrix}.
$$

\section{Preliminaries}\label{sec:preliminaries}
In this section, we describe preliminary notions and definitions used in the sequel.


\subsection{Switched Linear Systems}
A switched linear system  \cite{liberzon2003switching} switches between different modes $s\in S=\{1,\ldots,|S|\}$, where there is a different linear dynamic corresponds to each mode $s$.  Mathematically, a discrete time switched linear system is described by 
\begin{equation}\label{eq:switched systems}
    x(k+1)=A_{s_k}x(k)+B_{s_k}w(k),
\end{equation}
where $x(k)\in\mathbb{R}^n$ is the state vector, $A_{s_k}\in\mathbb{R}^{n\times n}$ and $B_{s_k}\in\mathbb{R}^{n\times m}$ implies a matrix $A\in \{A_1,\ldots,A_{|S|}\}$ and matrix  $B\in \{B_1,\ldots,B_{|S|}\}$, respectively.  The linear dynamics of (\ref{eq:switched systems}) is given by matrices $A_i$ and $B_i$ when $s_k=i$, i.e, the  mode that the system is in  at time $k$. $w(k)$  denotes an i.i.d random noise with  mean $\mu_w$ and variance $R_w$.

The system in (\ref{eq:switched systems}) in its general form is a hybrid system where the mode switches could depend on both the continuous dynamics and discrete mode. Such a hybrid system has been extensively studied \cite{liberzon2003switching,lin2009stability}.  In this paper, we consider the cases where the mode switches are governed by a Markov decision process whose transitions only depends on discrete modes but are independent from the continuous state values.


\subsection{Markov Decision Processes }
 Formally, a Markov decision process (MDP) \cite{puterman2014markov} is defined as follows.
\begin{definition}
	An MDP is a tuple $\mathcal{M}=(S,\hat{s},\Sigma,T)$ which includes a finite set $S$ of states,
 an initial state $\hat{s}$,  a finite set $\Sigma$ of actions. 
		$T:S\times \Sigma\times S\rightarrow [0,1]$ is the probabilistic transition function with $
		T(s,\sigma,s'):=p(s'|s,\sigma),   \text{for}\; s,s'\in S, \sigma\in \Sigma$. We denote the number of modes, i.e., $|S|$ as $N$. 
\end{definition}

	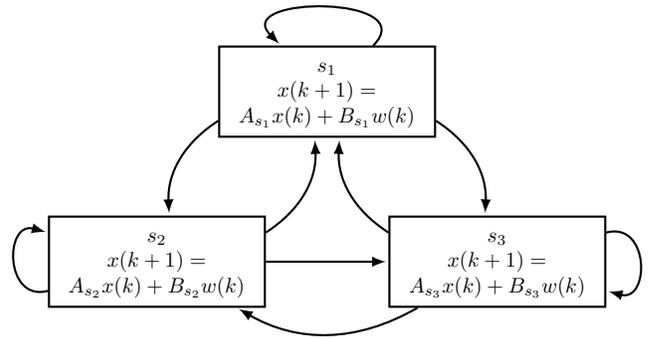
\begin{figure}[t!]
		\centering	
		\begin{tikzpicture}[shorten >=1pt,node distance=4cm,on grid,auto, thick,scale=0.8, every node/.style={transform shape}]
		\node[block] (q_1)   {\begin{tabular}{c}
		     $s_1$ \\ $x(k+1)=$\\$A_{s_1}x(k)+B_{s_1}w(k)$
		\end{tabular}};
		\node[block] (q_2) [below left = 4cm of q_1] {\begin{tabular}{c}
		     $s_2$ \\ $x(k+1)=$\\$A_{s_2}x(k)+B_{s_2}w(k)$
		\end{tabular}};
		\node[block] (q_3) [below right =4cm of q_1] {\begin{tabular}{c}
		     $s_3$ \\ $x(k+1)=$\\$A_{s_3}x(k)+B_{s_3}w(k)$
		\end{tabular}};

		\path[->]
		(q_1) edge [loop, above, looseness=2] node  {} (q_1)
		(q_1) edge [pos=0.5, bend right, above=0.5,sloped] node {} (q_2)
		(q_1) edge [pos=0.5, bend left, above=0.5,sloped] node {} (q_3)
		
		(q_2) edge [pos=0.5, loop left, looseness=2] node  {} (q_2)
		(q_2) edge [pos=0.5, bend right, above=0.5,sloped] node {} (q_1)
		(q_2) edge [pos=0.5, above=0.5] node {} (q_3)
		
		(q_3) edge [pos=0.5, loop right, looseness=2] node {} (q_3)
		(q_3) edge [pos=0.5, bend left, above=0.5,sloped] node {} (q_1)
		(q_3) edge [pos=0.5, bend left, above=0.5,sloped] node {} (q_2)
		;
		\end{tikzpicture}
		\caption{A system with mode switches governed by an MDP. Each arrow indicates a possible transition. Transition probabilities and actions are omitted.}\label{fig:mdp}
	\end{figure}
We denote $T_\sigma\in\mathbb{R}^{N\times N}$ as the transition probabilities induced by an action $\sigma\in\Sigma$ between state pairs, where $T_\sigma(i,j)=T(s_i,\sigma,s_j)$. If $\sigma$ is not defined on a state $s_i$, $T_\sigma(i,j)=0$ for any $s_j\in S$.

An example of an MDP $\mathcal{M}$ of three states that governs the mode switch of a system of the form (\ref{eq:switched systems}) is shown in Figure \ref{fig:mdp}. It can be observed that there are three system dynamics corresponding to each discrete mode, i.e.
\begin{equation}\label{eq:1}
x(k+1)=A_{s_i}x(k)+B_{s_i}w(k),\text{ for } s_i\in S.
\end{equation}

At each state $s$, there is a set of actions available to choose. Then the nondeterminism of the action selection has to be resolved by a policy $\pi$.

\begin{definition}
	A (randomized) policy $\pi:S\times \Sigma \rightarrow [0,1]$  of an MDP $\mathcal{M}$ is a function that maps every state action pair $(s,\sigma)$ where $s\in S$ and $\sigma\in \Sigma$  with a probability $\pi(s,\sigma)$.    
\end{definition}
By definition, the policy $\pi$ specifies the probability for the next action $\sigma$ to be taken at the current state $s$. As a result, given a policy $\pi$, the MDP $\mathcal{M}$ reduces to a discrete time Markov chain (DTMC) $\mathcal{C}=(S,\hat{s},P)$, where $P$ represents the transition matrix and can be calculated by 
$$
P(s_i,s_j)=\sum_{\sigma \in \Sigma} T(s_i,\sigma,s_j)\pi(s_i,\sigma).
$$
\subsection{Markov Jump Linear Systems}
We define Markov jump linear system \cite{costa2006discrete} as follows.
\begin{definition}
    A Markov jump linear system (MJLS) is a switched system defined in (\ref{eq:switched systems}) with the mode switches governed by a DTMC $\mathcal{C}=(S,\hat{s},P)$. 
\end{definition}

Given a switched system in (\ref{eq:switched systems}) with modes $S$ whose switches are governed by an MDP $\mathcal{M}=(S,\hat{s},\Sigma,T)$ and a policy $\pi$, the resulting system is an MJLS whose mode switches  can be characterized by the DTMC $\mathcal{C}$ induced from the policy $\pi$.


If the system (\ref{eq:switched systems}) is in mode $s_i$, then the probability that it switches to mode $s_j$ is given by $P(s_i,s_j)$.

For MJLS analysis, stability is one of the major concerns. Several notions of stability has been defined in the existing literature \cite{shi2015survey}. In this paper, we are interested in mean square stability as defined below.

\begin{definition}\cite{costa2006discrete}
    An MJLS is said to be mean square stable if
    \begin{equation}\nonumber
    \begin{split}
     &\lim_{k\rightarrow\infty}||E[x(k)-\mu]||_\infty\rightarrow 0 \text{ and }\\
    &\lim_{k\rightarrow\infty}||E[x(k) x'(k)]-C||_\infty\rightarrow 0.       
    \end{split}
    \end{equation}
    for any initial condition $x_0$, where $\mu$ and $C$ are constants.
\end{definition}

\subsection{Semidefinite Program and Bilinear Matrix Inequalities}

In this paper, we use semidefinite programs (SDPs) and bilinear matrix inequalities (BMIs) extensively in our solution approach. We briefly define them in following.

An SDP is an optimization problem with a linear objective, linear equality constraints and a matrix nonnegativity constraint on the variable $y \in \mathbb{R}^{n}$, which can be written as
\begin{align}
    \text{minimize} &\quad c'y\label{eq:sdp objective}\\
  \text{subject to}&  \quad\nonumber\\
    &\quad Ay=b,\label{eq:sdp_equality}\\
   &\quad \displaystyle \sum^n_{i=1} y_i F_i\geq F_0,\label{eq:sdp_positive}
\end{align}
where $F_0,\ldots,F_m \in \mathbb{R}^{p \times p}$, are given symmetric matrices, $A \in \mathbb{R}^{m \times n}$ is a given matrix, and $c \in \mathbb{R}^n, b \in \mathbb{R}^m$ are given vectors. SDPs are convex optimization problems, and can be solved efficiently using interior point methods~\cite{nesterov1994interior,Boyd}. The constraint in~\eqref{eq:sdp_positive} is named as a linear matrix inequality (LMI), and it is a convex constraint in $y$.

A BMI can be written as the following form:
\begin{align*}
 &\quad \displaystyle \sum^n_{i=1} y_j F_i+\sum^m_{j=1} z_j G_j+\sum^n_{i=1}\sum^m_{j=1} y_i z_j H_{ij}\geq F_0,
\end{align*}
where $F_i, G_j, H_{ij} \in \mathbb{R}^{p \times p}$ for $i={1,\ldots,n}$ and $j={1,\ldots,m}$ are given symmetric matrices, and $x \in \mathbb{R}^n, y \in \mathbb{R}^m$ are a vector of variables. A BMI is an LMI in $y$ for fixed $z$ and an LMI in $z$ for fixed $y$. The bilinear terms in a BMI make the feasible set not jointly convex in $y$ and $z$ and it is generally hard to find a feasible solution to a BMI~\cite{vanantwerp2000tutorial}.

\section{Problem Formulation}\label{sec:Problem Formulation}
In traditional MDP literature, finding a policy for an optimized expected cost \cite{puterman2014markov} or to satisfy a specification in temporal logic \cite{baier2008principles} is of the primary concern. However, in this paper, we are concerned with finding a policy $\pi$ in an MDP that governs the switches of a system defined in (\ref{eq:switched systems}). In this case, our objective is to stabilize the switched system defined in~\ref{eq:switched systems}.         


\begin{problem}
Given a switched system as in (\ref{eq:switched systems}) with modes $S$ whose probabilistic transition is described by an MDP $\mathcal{M}=(S,\hat{s},\Sigma,T)$, find a policy $\pi:S\times \Sigma\rightarrow [0,1]$ for $\mathcal{M}$ such that the resulting MJLS with switches defined by the induced DTMC $\mathcal{C}$ is mean square stable.
\end{problem}

\section{Stability Guaranteed Policy Synthesis}\label{sec:Stability Guaranteed Policy Synthesis}
A significant amount of efforts has been devoted to the stability analysis of MJLS in the recent two decades \cite{costa2006discrete,shi2015survey}. We first review some stability conditions that we will leverage to synthesize policies that stabilizes the switched system.
\subsection{Stability Conditions}
Two necessary and sufficient stability conditions for an MJLS are given as the following.
\begin{thm}\cite{costa2006discrete}\label{thm:necceary and sufficient}
Given an MJLS as defined in (\ref{eq:switched systems}) whose mode $s\in S$ makes random transitions described by a DTMC $\mathcal{C}=(S,\hat{s},P)$,  the following assertions are equivalent.


\begin{enumerate}
    \item The MJLS is mean square stable.
    \item $\rho(\mathcal{A})< 1,$ where
    $$
    \mathcal{A} = (P'\otimes I)diag(A_i\otimes A_i),
    $$
    and  $I$ is the identity matrix of a proper dimension.
    \item  There exists a $V=(V_1,\ldots,V_N)\in \mathbb{R}^{n\times n}$ with $V>0$  such that  
	\begin{equation}\label{eq:n&s condition}
	   V-\mathcal{T}(V)>0, 
	\end{equation}
	where 
	$$
	\mathcal{T}_j(V)=\sum_{i=1}^N p_{ij}A_iV_iA'_i \text{ with }  p_{ij}=P(s_i,s_j).
	$$
\end{enumerate}
\end{thm}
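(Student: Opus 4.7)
The plan is to establish the cycle $(1) \Leftrightarrow (2) \Leftrightarrow (3)$ by reducing mean square stability to the asymptotic stability of a finite-dimensional linear recursion on second moments, and then recognizing the Lyapunov inequality in (\ref{eq:n&s condition}) as the standard discrete-time Lyapunov characterization of a Schur-stable positive operator acting on the cone of PSD matrix tuples.

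For $(1) \Leftrightarrow (2)$, I would introduce the mode-indexed second moment matrices $Q_i(k) := E[x(k) x'(k) \mathbf{1}_{s_k = i}]$. From $x(k+1) = A_{s_k} x(k) + B_{s_k} w(k)$, the Markov property of $s_k$, and the independence of $w(k)$, a direct calculation yields
\begin{equation*}
Q_j(k+1) = \sum_{i=1}^N p_{ij} A_i Q_i(k) A'_i + N_j(k),
\end{equation*}
where $N_j(k)$ collects the bounded noise contributions determined by $\mu_w$ and $R_w$. Stacking $q(k) := [\mathrm{vec}(Q_1(k))'; \ldots; \mathrm{vec}(Q_N(k))']'$ and using $\mathrm{vec}(A Q A') = (A \otimes A)\,\mathrm{vec}(Q)$, the block structure collapses to $q(k+1) = \mathcal{A}\, q(k) + n(k)$ with $\mathcal{A}$ exactly as in the statement. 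Mean square stability is then equivalent to the homogeneous recursion $q(k+1) = \mathcal{A} q(k)$ being asymptotically stable, i.e., $\rho(\mathcal{A}) < 1$. The first-moment part of the MSS definition is handled analogously on $E[x(k)\mathbf{1}_{s_k=i}]$ and is dominated by the same spectral condition, so it follows at no extra cost.

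For $(2) \Leftrightarrow (3)$, I would view $\mathcal{T}$ as a linear operator on the direct sum of $N$ copies of $\mathbb{R}^{n \times n}$ that maps the cone of PSD tuples into itself. Component-wise vectorization identifies $\mathcal{T}$ with precisely the matrix $\mathcal{A}$, so $\rho(\mathcal{T}) = \rho(\mathcal{A})$. The desired equivalence is then the classical Lyapunov characterization of spectral radius for positive operators: if $\rho(\mathcal{T}) < 1$, pick any $W = (W_1,\ldots,W_N)$ with $W > 0$ and set $V := \sum_{k \geq 0} \mathcal{T}^k(W)$, whose convergence is guaranteed by the spectral condition and which satisfies $V - \mathcal{T}(V) = W > 0$; conversely, if some $V > 0$ satisfies $V - \mathcal{T}(V) > 0$, a monotonicity argument on the iterates $\mathcal{T}^k(V)$ inside the PSD cone forces $\mathcal{T}^k(V) \to 0$, which in turn gives $\rho(\mathcal{T}) < 1$.

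The main obstacle is handling the positivity structure carefully in this last step: the inequality $V > \mathcal{T}(V)$ lives in the \emph{matrix} (PSD) ordering on a direct sum of symmetric matrices, not a plain vector ordering, so what ultimately closes the loop between the spectral condition on $\mathcal{A}$ and the matrix Lyapunov inequality is the fact that $\mathcal{T}$ preserves the PSD cone and is monotone with respect to it. The remaining work — verifying boundedness of $N_j(k)$, convergence of the first moment, and identifying the constants $\mu$ and $C$ in the MSS definition — is routine bookkeeping once the spectral reduction and the Lyapunov characterization are in place.
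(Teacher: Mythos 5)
The paper offers no proof of this theorem at all --- it is imported verbatim from \cite{costa2006discrete} --- so there is no in-paper argument to compare against; your sketch is essentially the standard proof from that reference (second-moment recursion $Q_j(k+1)=\sum_i p_{ij}A_iQ_i(k)A'_i+N_j(k)$, vectorization via $\mathrm{vec}(AQA')=(A\otimes A)\mathrm{vec}(Q)$ to identify the moment operator with $\mathcal{A}$, and the Stein/Lyapunov characterization of $\rho(\mathcal{T})<1$ using monotonicity of $\mathcal{T}$ on the PSD cone), and the plan is sound. Two points deserve slightly more care than your ``routine bookkeeping'' remark suggests: first, since $\mu_w$ need not be zero, $N_j(k)$ contains cross terms of the form $\sum_i p_{ij}A_iE[x(k)\mathbf{1}_{s_k=i}]\mu'_wB'_i$, so it is bounded only after you have separately established boundedness of the first moments (which does follow, since the first-moment operator $(P'\otimes I)\,diag(A_i)$ satisfies $\rho(\cdot)^2\le\rho(\mathcal{A})$); second, in closing $(1)\Rightarrow(2)$ and $(3)\Rightarrow(2)$ you must pass from decay on PSD (indeed rank-one) initial data to $\rho(\mathcal{A})<1$ on the full $Nn^2$-dimensional vectorized space, which uses that every symmetric tuple is a difference of PSD tuples dominated by multiples of $V$ and that the spectral radius of $diag(A_i\otimes A_i)$ restricted to symmetric matrices equals its full spectral radius. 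Both gaps are genuinely minor and are filled in \cite{costa2006discrete}.
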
 

Note that the stability conditions do not depend on either the initial state $\hat{s}$ of the MDP or the initial continuous state $x(0)$.  For computational efficiency, we state a sufficient stability condition as follows.  
\begin{corollary}\cite{costa2006discrete}\label{corollary:sufficient}
Given an MJLS as defined in (\ref{eq:switched systems}) whose mode $s\in S$ makes transitions following a DTMC $\mathcal{C}=(S,\hat{s},P)$,  the MJLS is mean square stable if there exists $\alpha_i>0$ such that the following is satisfied.

	\begin{equation}\label{eq:sufficient condition 1}
\alpha_iI-\sum_{j=1}^N p_{ij}\alpha_jA_iA'_i>0,\; i \in \lbrace{1,\ldots,N\rbrace}.
\end{equation}

\end{corollary}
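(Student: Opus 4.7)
The plan is to derive the sufficient condition by instantiating the necessary and sufficient Lyapunov characterization in item~(3) of Theorem~\ref{thm:necceary and sufficient} with a restricted class of Lyapunov matrices. Specifically, I would look for a certificate $V=(V_1,\ldots,V_N)$ of the special form $V_i=\alpha_i I$ with $\alpha_i>0$, which automatically guarantees $V_i>0$. This turns the coupled matrix inequalities for a general $V$ into a family of inequalities parameterized only by the scalars $\alpha_i$, which is exactly the form of (\ref{eq:sufficient condition 1}).

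The execution would go as follows. First, assume that $\alpha_i>0$ satisfying (\ref{eq:sufficient condition 1}) have been given, and define $V_i=\alpha_i I$ for every $i\in\{1,\ldots,N\}$. Next, substitute this choice into the Lyapunov operator $\mathcal{T}$ (or, if needed to match the indexing of (\ref{eq:sufficient condition 1}), into its adjoint operator $\mathcal{T}^{\ast}$ acting on test matrices, whose spectral radius coincides with that of $\mathcal{T}$ and hence yields the same stability criterion). Because $A_i$ does not depend on the summation index, the scalar $\sum_{j}p_{ij}\alpha_j$ factors out and the relevant component of $V-\mathcal{T}(V)$ (respectively $V-\mathcal{T}^{\ast}(V)$) simplifies to exactly
\[
\alpha_i I-\Bigl(\sum_{j=1}^{N}p_{ij}\alpha_j\Bigr)A_iA'_i,
\]
so that the Lyapunov inequality (\ref{eq:n&s condition}) reduces to the hypothesis (\ref{eq:sufficient condition 1}). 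Finally, apply the equivalence between items~(1) and~(3) of Theorem~\ref{thm:necceary and sufficient} to the DTMC $\mathcal{C}=(S,\hat{s},P)$ to conclude that the MJLS is mean square stable.

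The step I expect to require the most care is the second one: matching the summation indices in (\ref{eq:sufficient condition 1}) with those in (\ref{eq:n&s condition}), which requires selecting the correct side of the Lyapunov duality (primal versus adjoint) so that the sum collapses in the desired direction. Once that pairing is fixed, the argument is a routine substitution that exhibits the scalar inequalities as a special case of the matrix Lyapunov inequality, and no additional bookkeeping with the noise term $w(k)$ is needed since mean square stability in Theorem~\ref{thm:necceary and sufficient} is characterized by the unforced second-moment dynamics.
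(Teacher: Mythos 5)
The paper offers no proof of this corollary at all---it is imported verbatim from \cite{costa2006discrete} as a known result---so there is no in-paper argument to compare against; what follows is an assessment of your plan on its own terms. Your strategy (specialize the Lyapunov characterization of mean square stability to certificates of the restricted form $V_i=\alpha_i I$) is the standard derivation and it does go through, and you have correctly located the one delicate point: substituting $V_i=\alpha_i I$ directly into the operator $\mathcal{T}$ of Theorem~\ref{thm:necceary and sufficient} yields $\alpha_j I-\sum_{i}p_{ij}\alpha_i A_iA_i'>0$, which sums over the \emph{first} index of $p_{ij}$ and is \emph{not} \eqref{eq:sufficient condition 1}; to get the stated form you must use the adjoint operator $\mathcal{T}^{*}_i(V)=A_i'\bigl(\sum_{j}p_{ij}V_j\bigr)A_i$, for which the scalar $\sum_j p_{ij}\alpha_j$ factors out. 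Two details still need to be pinned down before the write-up is complete. First, the adjoint produces $A_i'A_i$, not $A_iA_i'$ as in \eqref{eq:sufficient condition 1}; this is harmless because $A_iA_i'$ and $A_i'A_i$ have the same eigenvalues, so $\alpha_i I-cA_iA_i'>0$ if and only if $\alpha_i I-cA_i'A_i>0$ for $c=\sum_j p_{ij}\alpha_j>0$, but you should say so rather than assert that the adjoint ``simplifies to exactly'' the $A_iA_i'$ form. Second, Theorem~\ref{thm:necceary and sufficient} as stated in the paper gives the Lyapunov inequality only for $\mathcal{T}$, not for $\mathcal{T}^{*}$; your appeal to ``the same stability criterion'' for the adjoint requires the additional (standard, and available in \cite{costa2006discrete}) fact that $\mathcal{T}^{*}$ is likewise a positive operator with $\rho(\mathcal{T}^{*})=\rho(\mathcal{T})$, so that existence of $V>0$ with $V-\mathcal{T}^{*}(V)>0$ also certifies mean square stability. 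With those two facts made explicit or cited, your argument is complete; your closing remark that the noise term $w(k)$ plays no role is consistent with the second-moment characterization used throughout the paper.
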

The condition given in~\eqref{eq:n&s condition} can be checked by solving an SDP with $V_i$ as variables. However, the number of variables for this SDP is $n^2\cdot N$, and finding a feasible solution for the SDP can be time consuming for large $n$ and $N$. On the other hand, the condition in~\eqref{eq:sufficient condition 1} can be checked by solving an SDP with $N$ variables, and the size of the optimization problem is smaller compared to the optimization problem in~\eqref{eq:n&s condition}. Based on these two stability conditions, we propose three different approaches to find the policy in Section \ref{subsec:sos} and Section \ref{subsec:co}.


\subsection{Deterministic policy is not sufficient}
We first show that a deterministic policy, i.e, $\pi:S\rightarrow A$ is not sufficient to guarantee the system stability. It means that there may not exist a deterministic policy to stabilize the system in (\ref{eq:switched systems}), but there exists an randomized policy that achieves stability. 

\begin{thm}
Given a switched system as defined in (\ref{eq:switched systems}) whose mode $s\in S$ makes transitions following a MDP $\mathcal{M}=(S,\hat{s},\Sigma,T)$, deterministic policies are not sufficient to render the system mean square stable.
\end{thm}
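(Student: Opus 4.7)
The plan is to establish the statement by exhibiting a concrete counterexample: an MDP $\mathcal{M}$ together with matrices $\{A_i,B_i\}_{i\in S}$ such that every deterministic stationary policy induces a DTMC under which the MJLS fails the stability test of Theorem~\ref{thm:necceary and sufficient}, while some randomized policy induces a DTMC that passes it. This is enough, because Theorem~\ref{thm:necceary and sufficient} gives an \emph{if and only if} characterization of mean square stability.

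First, I would fix a small MDP, say $|S|=2$ with two actions available at each state, so there are only four deterministic stationary policies and each induces an easily tabulated transition matrix $P$. The key observation is that the set of transition matrices reachable by randomized policies is exactly the convex hull of the four ``vertex'' matrices produced by the deterministic policies, but the map $P\mapsto \rho(\mathcal{A}(P))$ with $\mathcal{A}=(P'\otimes I)\,diag(A_i\otimes A_i)$ is not a convex function of $P$. Consequently, there is no a priori obstruction to having $\rho(\mathcal{A})\ge 1$ at all four vertices while $\rho(\mathcal{A})<1$ at some interior mixture, and this is precisely what I would exploit.

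Second, I would pick $A_1,A_2$ that are each individually expansive (so that ``staying put'' in either mode produces growth) but that together, after symmetrization by rapid stochastic switching, shrink second moments. Concrete candidates are $2\times 2$ shear-like matrices whose growth directions are nearly orthogonal to one another, so that mode-swapping mixes the dominant directions and shrinks the spectral radius of the Kronecker-lifted operator. I would then (i) enumerate the four deterministic policies, form each $P$, compute $\mathcal{A}$, and verify $\rho(\mathcal{A})\ge 1$ using item~(2) of Theorem~\ref{thm:necceary and sufficient}, and (ii) exhibit one specific randomized policy, such as the uniform $\tfrac{1}{2}$--$\tfrac{1}{2}$ mixture at each state, whose induced $P$ yields $\rho(\mathcal{A})<1$. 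Applying Theorem~\ref{thm:necceary and sufficient} in both directions then yields the conclusion.

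The main obstacle is purely constructive rather than conceptual: the theorem is almost vacuous once a single example is found, but finding clean numerical values of $A_1,A_2$ and of the four action-induced stochastic vectors so that \emph{all} four deterministic spectral radii exceed one while the interior mixture strictly dips below one requires some tuning. Scalar or diagonal choices of $A_i$ will not work, because there $\rho(\mathcal{A})$ turns out to be monotone in the diagonal of $P$ and every mixture is dominated by some vertex; non-normal $A_i$ (shears or rotations) are essential so that the cross terms in $A_iV_iA_i'$ contribute nontrivially to the operator $\mathcal{T}$ of~\eqref{eq:n&s condition}.
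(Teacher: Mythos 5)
Your approach is exactly the paper's: the published proof is a two--mode, two--action counterexample with non-normal $2\times 2$ matrices, verified through the spectral-radius criterion $\rho(\mathcal{A})<1$ of Theorem~\ref{thm:necceary and sufficient}, which is precisely the construction you outline. Your supporting observations are also sound and, in fact, sharper than anything the paper says explicitly: the reachable transition matrices do form the convex hull of the four deterministic ``vertex'' matrices (the convex hull of a product of finite sets is the product of the row-wise hulls), $\rho(\mathcal{A}(P))$ is not convex in $P$, and non-normality of the $A_i$ is what lets a mixture beat every vertex --- the paper's $A_1,A_2$ are indeed non-symmetric with nearly orthogonal growth directions.

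The gap is the one you name yourself, and for this particular theorem it is not a minor one: the entire mathematical content of the statement is the existence of a single concrete instance, and you do not produce one. ``There is no a priori obstruction'' to all four vertices being unstable while an interior mixture is stable does not establish that such an instance exists; the argument only becomes a proof once specific $A_1,A_2,T_{\sigma_1},T_{\sigma_2}$ are written down and the five spectral radii (four deterministic, one randomized) are actually computed. For reference, the paper uses
$A_1=\bigl[\begin{smallmatrix}0.99 & -0.56\\ -0.19 & 0.73\end{smallmatrix}\bigr]$,
$A_2=\bigl[\begin{smallmatrix}0.38 & -0.98\\ -0.66 & -0.66\end{smallmatrix}\bigr]$ with
$T_{\sigma_1}=\bigl[\begin{smallmatrix}0.21 & 0.79\\ 0.90 & 0.10\end{smallmatrix}\bigr]$,
$T_{\sigma_2}=\bigl[\begin{smallmatrix}0.71 & 0.29\\ 0.13 & 0.87\end{smallmatrix}\bigr]$: the best deterministic policy ($\sigma_1$ in both modes) gives $\rho(\mathcal{A})=1.04$, while randomizing only in mode $2$ (probability $0.27$ on $\sigma_1$) gives $\rho(\mathcal{A})=0.90$. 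Note also that you do not need to randomize at every state as your ``uniform mixture'' suggestion implies --- randomizing at a single state suffices, which slightly simplifies the search.
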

\begin{proof}
We prove this theorem by showing an counterexample. Consider a switched system with system dynamics in (\ref{eq:switched systems})

$A_1=
\begin{bmatrix}
    0.99 & -0.56  \\
    -0.19 & 0.73
\end{bmatrix}$,
$A_2=
\begin{bmatrix}
    0.38 & -0.98  \\
    -0.66 & -0.66 
\end{bmatrix}$

The MDP $\mathcal{M}=(S,\hat{s},\Sigma,T)$ where $S=\{s_1,s_2\}$ and $\Sigma=\{\sigma_1,\sigma_2\}$. The transition probabilities induced by action $\sigma_1$ and $\sigma_2$ are

$T_{\sigma_1}=
\begin{bmatrix}
    0.21 & 0.79  \\
    0.90 & 0.10 
\end{bmatrix}$
and
$T_{\sigma_2}=
\begin{bmatrix}
    0.71 & 0.29  \\
    0.13 & 0.87 
\end{bmatrix}$.
 
The deterministic policy that induces a minimal spectral radius  is selecting $\sigma_1$ in both mode $1$ and $2$. The spectral radius $\rho(\mathcal{A})$ of the MJLS induced by this policy is $1.04>1$, which makes the overall system unstable. 

However, the policy that selects $\sigma_1$ in mode 1, and selects $\sigma_1$ in mode 2 with a probability of $0.27$ induces an MJLS that has a spectral radius of $\rho(\mathcal{A})=0.90<1$. So the system is stable according to Theorem \ref{thm:necceary and sufficient}.  Therefore, we conclude that deterministic policies are not sufficient to stabilize the system in~\eqref{eq:switched systems}. 

\end{proof}

\subsection{Policy Synthesis via Bilinear Matrix Inequalities}\label{subsec:sos}
In this section, we formulate a condition based on bilinear matrix inequalities to synthesize stabilizing a policy for the system in~\eqref{eq:switched systems}. The condition is a straightforward generalization of the linear matrix inequalities given in~\eqref{eq:n&s condition}. The following result states that the search for a stabilizing policy can be done by finding a solution to a set of bilinear matrix inequalities.

\begin{thm}\label{thm:bilinear}
Consider a switched system (\ref{eq:switched systems}) whose mode $s\in S$ makes transitions following an MDP $\mathcal{M}=(S,\hat{s},\Sigma,T)$. If there exists matrices $V_i \in \mathbb{R}^{n \times n}$, and $\pi$ such that the following holds:
\begin{align}
  &  V_i >0,\label{eq:positive_sos}\\
   & 	   V-\mathcal{T}(V)>0,\label{eq:positive2_sos}\\
&	\mathcal{T}_j(V)=\sum_{i=1}^N  p_{ij}A_iV_iA_i^{'},\label{eq:positive3_sos}\\
& p_{ij}=\sum_{\sigma \in \Sigma} T(i,\sigma,j)\pi(i,\sigma),\label{eq:induced dtmc_sos}\\
& \sum_{\sigma \in \Sigma} \pi_{i,\sigma}=1,\label{eq:policy_welldefined_sos}\\
&\pi(i,\sigma)\geq 0.\label{eq:policy_nonnegative_sos}
\end{align}
for $i,j= \lbrace{1,\ldots,N\rbrace}$ and $\sigma \in \Sigma$, then the induced MJLS is mean square stable.
\end{thm}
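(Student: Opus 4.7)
The plan is to reduce the statement to a direct invocation of part (3) of Theorem~\ref{thm:necceary and sufficient}. Given a feasible tuple $(\pi,V_1,\ldots,V_N)$ satisfying (\ref{eq:positive_sos})--(\ref{eq:policy_nonnegative_sos}), I would first argue that $\pi$ is a well-defined randomized policy, so that applying $\pi$ to $\mathcal{M}$ induces a DTMC $\mathcal{C}=(S,\hat{s},P)$ whose transition matrix $P$ has entries exactly the $p_{ij}$ in (\ref{eq:induced dtmc_sos}). Then the switched linear system in (\ref{eq:switched systems}) under $\pi$ is, by definition, an MJLS driven by $\mathcal{C}$, and the remaining constraints of the hypothesis are precisely the Lyapunov certificate that Theorem~\ref{thm:necceary and sufficient} guarantees to imply mean square stability.

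The concrete steps I would carry out are as follows. First, verify that $P$ is a valid stochastic matrix: nonnegativity of its entries follows from (\ref{eq:policy_nonnegative_sos}) together with nonnegativity of $T$, and rows summing to one follows from (\ref{eq:policy_welldefined_sos}) together with the fact that for each $(i,\sigma)$ the map $j\mapsto T(i,\sigma,j)$ is a probability distribution, so that $\sum_j p_{ij}=\sum_\sigma \pi(i,\sigma)\sum_j T(i,\sigma,j)=\sum_\sigma \pi(i,\sigma)=1$. Second, observe that with this $P$ the operator $\mathcal{T}_j(V)=\sum_i p_{ij}A_iV_iA_i'$ coming from (\ref{eq:positive3_sos}) is literally the operator $\mathcal{T}$ appearing in Theorem~\ref{thm:necceary and sufficient}. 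Third, invoke (\ref{eq:positive_sos}) and (\ref{eq:positive2_sos}) to conclude that a $V=(V_1,\ldots,V_N)>0$ with $V-\mathcal{T}(V)>0$ exists for the induced MJLS, so that condition (3) of Theorem~\ref{thm:necceary and sufficient} is satisfied and mean square stability follows.

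I do not anticipate any real technical obstacle here; the statement is essentially a lifting of the Lyapunov condition for MJLS to the MDP setting, and once the induced DTMC is checked to be valid the conclusion is immediate from Theorem~\ref{thm:necceary and sufficient}. The only conceptual subtlety worth flagging is that substituting $p_{ij}=\sum_\sigma T(i,\sigma,j)\pi(i,\sigma)$ into (\ref{eq:positive3_sos}) makes the constraints bilinear in the pair $(V,\pi)$ rather than linear in $V$ for fixed $P$; this is what converts the LMI feasibility problem implicit in Theorem~\ref{thm:necceary and sufficient} into the BMI feasibility problem stated here, and it explains why the theorem is phrased as a sufficient condition on the joint search even though, for any fixed stabilizing $\pi$, condition (3) of Theorem~\ref{thm:necceary and sufficient} is actually necessary and sufficient. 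This bilinearity has no bearing on the proof itself, which is just a direct verification.
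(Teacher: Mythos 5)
Your proposal is correct and follows essentially the same route as the paper's own proof: constraints (\ref{eq:induced dtmc_sos})--(\ref{eq:policy_nonnegative_sos}) define the induced DTMC, and then condition (3) of Theorem~\ref{thm:necceary and sufficient} is invoked on (\ref{eq:positive_sos})--(\ref{eq:positive3_sos}) to conclude mean square stability. Your explicit check that $P$ is row-stochastic is a small amount of added rigor that the paper leaves implicit, but it is not a different argument.
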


\begin{proof}
Constraints \eqref{eq:induced dtmc_sos}, \eqref{eq:policy_welldefined_sos}, \eqref{eq:policy_nonnegative_sos} construct the induced DTMC $\mathcal{C}$ with transitions governed by $p_{ij}.$ Using the result of Theorem 1, the constraints \eqref{eq:positive_sos}, \eqref{eq:positive2_sos}, \eqref{eq:positive3_sos} ensure that the MJLS is mean square stable for the induced DTMC $\mathcal{C}$. Hence, finding a policy and matrices $V_i$ that satisfies the constraints \eqref{eq:positive_sos}--\eqref{eq:induced dtmc_sos} shows that the  MJLS is mean square stable.
\end{proof}

Note that the constraints given in~\eqref{eq:positive_sos}--\eqref{eq:policy_nonnegative_sos} are BMI constraints due to multiplication between variables $\pi$ and $V$ in ~\eqref{eq:positive2_sos}--\eqref{eq:induced dtmc_sos}, therefore it is hard in general to find a policy by solving the BMI directly. In next section, we propose two scalable approaches based on convex optimization, and discuss their relationship with the BMI in~\eqref{eq:positive_sos}--\eqref{eq:policy_nonnegative_sos}.

\subsection{Policy Synthesis via Convex Optimization}\label{subsec:co}
In this section, we propose two methods to synthesize a policy that stabilizes the switched system in~\eqref{eq:switched systems}. The first method is based on checking feasibility of an SDP, which is an relaxation of the original stability condition. The second method is based on applying a coordinate descent on the variables $V$ and $\pi$. We can use coordinate descent in our case efficiently, as the constraints in~\eqref{eq:positive2_sos}--\eqref{eq:induced dtmc_sos} are LMI constraints if $V$ or $\pi$ is fixed.

\subsubsection{Semidefinite Relaxation}

In the following, we state our semidefinite relaxation to compute a policy that stabilizes the switched system in~\eqref{eq:switched systems}. Our relaxation extends the stability condition given in~\eqref{eq:sufficient condition 1} for an MJLS to a switched system whose mode switches are governed by an MDP. 

\begin{thm}\label{thm:sdp}
Consider a switched system (\ref{eq:switched systems}) whose mode $s\in S$ makes transitions following a MDP $\mathcal{M}=(S,\hat{s},\Sigma,T)$. If there exists $K_{i,\sigma},\alpha_i \in \mathbb{R}>0$ such that
\begin{align}
  &  V_i=\alpha_i I >0,\label{eq:positive_sdp}\\
   & 	   V-\mathcal{T}(V)>0,\label{eq:positive2_sdp}\\
&	\mathcal{T}_j(V)=\sum_{i=1}^N \sum_{\sigma \in \Sigma} T(i,\sigma,j)K_{i,\sigma}A_iA'_i,\label{eq:positive3_sdp}\\
& \sum_{\sigma \in \Sigma} K_{i,\sigma}=\alpha_i,\label{eq:policy_welldefined_sdp}\\
&K_{i,\sigma}\geq 0.\label{eq:policy_nonnegative_sdp}
\end{align}
for $i,j=\lbrace{1,\ldots,N\rbrace}$ and $\sigma \in \Sigma$, then the MJLS is mean square stable.
\end{thm}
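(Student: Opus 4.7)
The plan is to deduce mean square stability from Theorem \ref{thm:necceary and sufficient}, using a change of variables that turns the bilinear condition of Theorem \ref{thm:bilinear} into the linear form appearing in (\ref{eq:positive_sdp})--(\ref{eq:policy_nonnegative_sdp}). The central observation is that $K_{i,\sigma}$ should be interpreted as the product $\alpha_i \pi(i,\sigma)$: this linearization is what makes the relaxation an SDP, at the cost of restricting the Lyapunov ansatz to $V_i=\alpha_i I$.

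First, I would define the candidate randomized policy by $\pi(i,\sigma) := K_{i,\sigma}/\alpha_i$. Nonnegativity in (\ref{eq:policy_nonnegative_sdp}) together with $\alpha_i>0$ makes $\pi(i,\sigma)\geq 0$, and the normalization (\ref{eq:policy_welldefined_sdp}) gives $\sum_{\sigma}\pi(i,\sigma)=1$, so $\pi$ is a well-defined policy. It then induces a DTMC $\mathcal{C}$ with transition matrix
\[
p_{ij} \;=\; \sum_{\sigma\in\Sigma} T(i,\sigma,j)\,\pi(i,\sigma) \;=\; \frac{1}{\alpha_i}\sum_{\sigma\in\Sigma} T(i,\sigma,j)\,K_{i,\sigma}.
\]

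Next, I would verify that the tuple $V=(\alpha_1 I,\ldots,\alpha_N I)$ satisfies the Lyapunov certificate (\ref{eq:n&s condition}) for the MJLS governed by $\mathcal{C}$. Since $V_i=\alpha_i I$ gives $A_i V_i A_i' = \alpha_i A_i A_i'$, the operator from Theorem \ref{thm:necceary and sufficient} evaluates to
\[
\sum_{i=1}^N p_{ij}\, A_i V_i A_i' \;=\; \sum_{i=1}^N \alpha_i p_{ij} A_i A_i' \;=\; \sum_{i=1}^N\sum_{\sigma\in\Sigma} T(i,\sigma,j)\, K_{i,\sigma}\, A_i A_i',
\]
which coincides with the right-hand side of (\ref{eq:positive3_sdp}). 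Thus the $\mathcal{T}(V)$ appearing in the SDP is exactly the MJLS operator for $\mathcal{C}$, and (\ref{eq:positive_sdp})--(\ref{eq:positive2_sdp}) give precisely the hypothesis $V>0$ and $V-\mathcal{T}(V)>0$ of Theorem \ref{thm:necceary and sufficient}.

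Finally, invoking the equivalence (3)$\Rightarrow$(1) of Theorem \ref{thm:necceary and sufficient} concludes mean square stability of the induced MJLS. There is no real obstacle in the argument; the only subtle point is recognizing the substitution $K_{i,\sigma}=\alpha_i\pi(i,\sigma)$, which exposes why the SDP is only a sufficient condition: it corresponds to searching in the restricted class of scalar-diagonal Lyapunov certificates, and systems requiring a richer $V_i$ may be stabilizable by some policy without the SDP being feasible.
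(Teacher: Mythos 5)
Your proposal is correct and follows essentially the same route as the paper: both hinge on the substitution $K_{i,\sigma}=\alpha_i\pi(i,\sigma)$ together with the restricted ansatz $V_i=\alpha_i I$, recovering a valid randomized policy from a feasible point of the SDP. The only cosmetic difference is that you verify condition (3) of Theorem \ref{thm:necceary and sufficient} directly for the induced DTMC, whereas the paper phrases the argument as a specialization of the BMI condition in Theorem \ref{thm:bilinear}; the substance is the same.
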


\begin{proof}
Suppose that the condition given by constraints \eqref{eq:positive_sos}--\eqref{eq:policy_nonnegative_sos} is satisfied with $V_i=\alpha_i I>0, i=\lbrace{1,\ldots,N \rbrace}$. Then, the constraint~\eqref{eq:positive3_sos} becomes
\begin{align}
&	\mathcal{T}_j(V)=\sum_{i=1}^N \sum_{\sigma \in \Sigma} T(i,\sigma,j)\pi(i,\sigma)\alpha_i A_i A'_i,\label{eq:positive3_bdp}
\end{align}
with variables $\alpha_i>0, i=\lbrace{1,\ldots,N \rbrace}$ and $\pi.$ Note that for a given policy $\pi$ and the induced DTMC $\mathcal{C}$, the constraint in~\eqref{eq:positive3_bdp} is equivalent to the condition given by~\eqref{eq:sufficient condition 1} in Corollary 1. By defining the change of variable $K_{i,\sigma}= \pi(i,\sigma)\cdot\alpha_i $ for $i=\lbrace{1,\ldots,N\rbrace}$ and $\sigma\in\Sigma$, the constraints~\eqref{eq:positive3_sos}--\eqref{eq:policy_nonnegative_sos} are equivalent to the constraints in ~\eqref{eq:positive3_sdp}--\eqref{eq:policy_nonnegative_sdp}. Finding a feasible solution that satisfies the constraints in \eqref{eq:positive_sos}--\eqref{eq:policy_nonnegative_sos} yields a policy $\pi(i,\sigma)=K_{i,\sigma}/\alpha_i$ for $i=\lbrace{1,\ldots,N\rbrace}$ and $\sigma \in \Sigma$, which by construction satisfies the constraints in \eqref{eq:positive_sos}--\eqref{eq:policy_nonnegative_sos}. Therefore, the policy $\pi$ and $V$ ensures that the induced MJLS is mean square stable. 
\end{proof}

The constraints in \eqref{eq:positive_sdp}--\eqref{eq:policy_nonnegative_sdp} are LMIs in the variables $K$ and $\alpha$. Finding a feasible solution of a set of LMIs can be done by solving an SDP. However, this condition is only a sufficient as we restrict the structure of the  matrix $V$, therefore we may not be able to certify the stability of an MJLS even though there may exists a policy that ensures that the MJLS is MSS.





\subsubsection{Coordinate Descent}

In this section we discuss our coordinate descent (CD) approach, and discuss the differences in our method compared to a basic CD algorithm. Recall that a BMI is an LMI if one the variables is fixed, and we can check if the constraints in~\eqref{eq:positive_sos}--\eqref{eq:policy_nonnegative_sos} are feasible for a fixed $V$ or $\pi$. However, applying the basic CD on $V$ and $\pi$ requires the problems are feasible for a fixed $V$ or $\pi$, which is not necessarily true in our case. If the imitial problem is feasible, then we know that $\pi$ stabilizes the MJLS. Therefore, we assume that our initial policy do not stabilize the system.

Our implementation differs from a basic coordinate descent algorithm in the addition of the slack variables to the constraint in~\eqref{eq:positive2_sos}, which ensures that the resulting LMI is feasible for a fixed set of variables, and we use a proximal update between the variables instead of the original update method between $V$ and $\pi$. Details about the proximal update and the convergence guarantees can be found in~\cite{xu2013block}.

We start with an initial guess of the variables $V^0$ and $\pi^0$. Then in each iteration $k=\lbrace{1,\ldots,M\rbrace}$, we solve the following SDP for a fixed $\pi^k$:
\begin{align}
\text{minimize}&\quad -\gamma+\displaystyle\sum^N_{i=1} L||V_i-V^{k-1}_i||_2\\
\text{subject to}&\nonumber\\
  &  \quad V_i >0,\label{eq:positive_cdv}\\
   & \quad	   V-\mathcal{T}(V)\geq \gamma I,\label{eq:positive2_cdv}\\
&	\quad\mathcal{T}_j(V)=\sum_{i=1}^N  p_{ij}A_iV_iA'_i,\label{eq:positive3_cdv}\\
&\quad p_{ij}=\sum_{\sigma \in \Sigma} T(i,\sigma,j)\pi^k(i,\sigma),\label{eq:induced dtmc_cdv}
\end{align}
where $V_i\in \mathbb{R}^{n \times n}, i=\lbrace{1,\ldots,N\rbrace}$ and $\gamma \in \mathbb{R}$ are variables, and $L \in \mathbb{R}$ is a small positive constant. The SDP we solve for a fixed $V^k$ is given as follows:
\begin{align}
\text{minimize}&\quad -\gamma+\displaystyle\sum^N_{i=1}\sum_{\sigma \in \Sigma} L||\pi(i,\sigma)-\pi^{k-1}(i,\sigma)||_2\\
\text{subject to}&\nonumber\\
   & 	 \quad  V^k-\mathcal{T}(V^k)\geq \gamma I,\label{eq:positive2_cdp}\\
&	\quad\mathcal{T}_j(V)=\sum_{i=1}^N  p_{ij}A_iV^k_iA'_i,\label{eq:positive3_cdp}\\
&\quad p_{ij}=\sum_{\sigma \in \Sigma} T(i,\sigma,j)\pi(i,\sigma),\label{eq:induced dtmc_cdp}\\
& \quad\sum_{\sigma \in \Sigma} \pi_{i,\sigma}=1,\label{eq:policy_welldefined_cdp}\\
&\quad\pi(i,\sigma)\geq 0.\label{eq:policy_nonnegative_cdp}
\end{align}
with variables $\pi$ for $i=\lbrace{1,\ldots,N\rbrace}$ and $\sigma \in \Sigma$, and $\gamma \in \mathbb{R}$. After solving each SDP, we update the variables until we converge to a solution or we obtain a solution with $\gamma>0$. If we can find a solution with $\gamma>0$, the conditions~\eqref{eq:positive2_cdv} and~\eqref{eq:positive2_cdp} implies the condition given in~\eqref{eq:positive2_sos}, and the rest of the conditions in~\eqref{eq:positive_sos}--\eqref{eq:policy_nonnegative_sos} are already satisfied in either SDPs that we solve during CD. In this case, we stop the algorithm as the solution given by $V$ and $\pi$ guarantees that the MJLS is MSS. Note that our method is guaranteed to converge as we use the update (1.3b) in~\cite{xu2013block}, however the procedure can converge to a solution with $\gamma\leq 0$, which implies that the CD method cannot certify if the MJLS is MSS.


\section{Examples}\label{sec:Examples}

We demonstrate the proposed approach on three domains:
(1) randomly generated systems, (2) power regulation in wireless networks, and (3) transportation networks. The simulations were performed on a computer with an Intel Core i5-7200u 2.50 GHz processor and 8 GB of RAM with MOSEK~\cite{mosek} as the SDP solver, PENLAB~\cite{fiala2013penlab} as the BMI solver, and using the CVX~\cite{cvx} interface.  In each subsection, we show and compare the results of three proposed methods  by solving the bilinear matrix inequalities (BMI), coordinate descent between $V$ and $\pi$ (CD), and solving the semidefinite relaxation (SDP). 

\subsection{Numerical Examples}

To show the scalability of the proposed method, we generated 10 different systems in \eqref{eq:switched systems} with $n=15$, $N=2$, and $|\Sigma|=2$. The entries of the $A_i$ matrices are randomly selected between $[-0.5,0.5]$, and the transition probabilities for the MDP is generated randomly. 
We show the results of three different methods in Table~\ref{table:random15}. We report number of times that each method was able to find a solution and the average time in seconds for each method when the method is able to find a stabilizing policy.

The results show that the methods with coordinate descent and semidefinite relaxation is faster than the BMI method, and shows that the BMI method does not scale well for systems with large dimensions. The CD and SDP method have similar runtimes, however the CD method is able to find a policy that stabilizes the system in 9 cases out of $10$, and the SDP method can only find a policy in one of the systems. The BMI method had numerical troubles in 6 cases which converged to infeasible solutions.

\begin{table}[t]
\caption{Results for the numerical example with 10 different systems.}
\begin{center}
\scalebox{1.2}{
\begin{tabular}{lll}
\hline
    & Number of successful cases & Average Time (s) \\ \hline
BMP & 4                  & 1087.47    \\ \hline
CDR  & 9                  & 5.35       \\ \hline
SDR & 1                  & 1.11       \\ \hline
\end{tabular}}
\end{center}
\label{table:random15}
\end{table}

\subsection{Transmission Power Regulation in Wireless Networks}
	\begin{figure}
		\centering	
\begin{tikzpicture}[shorten >=1pt,node distance=3cm,on grid,auto, bend angle=20, thick,scale=0.7, every node/.style={transform shape}] 
				\node[state] (s0)   {$t_1$}; 
				\node[state] (s1) [below right= 2 and 1 of s0] {$r_1$}; 
				\node[state] (s2) [above right  = 2 of s0]  {$t_2$}; 
				\node[state] (s3) [ right= of s1] {$r_2$}; 
				\path[->]
				
				(s0) edge node {$g_{11}$} (s1) 
				(s0) edge node [below] {$g_{12}$} (s3) 
				(s2) edge [dashed] node {$g_{21}$} (s3) 
                (s2) edge [dashed] node {$g_{22}$} (s1) 
				; 

				\end{tikzpicture} 
		\caption{A wireless network with two transmitters and two receivers.}\label{fig:wireless network}
	\end{figure}
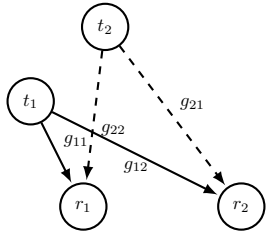
	
Let's consider a wireless network with $n$ nodes as shown in Figure \ref{fig:wireless network}. If the transmitter $t_i$ transmits with power $x_i$ to its corresponding receiving node $r_i$, its quality-of-service (QoS) can be characterized by the Signal-to-Interference-and-Noise-Ratio (SINR) as the following.
\begin{equation}\label{eq:SINR}
\Gamma_i=\frac{g_{ii}x_i}{\sum_{j\in\mathcal{N}_i}g_{ji}x_j+v_i},
\end{equation}
where $g_{ij}\in(0,1]$ denotes the path loss on the communication link between transmitter $t_i$ and receiver $r_j$ due to distance, shadowing and fading, $v_i$ denotes the thermal noise at the receiver $r_i$ and $\mathcal{N}_i$ denotes the set of transmitters different from $t_i$ that interfere with the receiver $r_i$. 

To achieve reliable communication, it is desired that SINR at the receiver $r_i$ is no less than a threshold $\gamma_i$, i.e.,
\begin{equation}\label{eq:QoS}
\frac{g_{ii}x_i}{\sum_{j\in\mathcal{N}_i}g_{ij}x_j+v_i}\geq\gamma_i. 
\end{equation}
If the path losses are constants, a well-known distributed power allocation algorithm, which is called Foschini-Miljanic (FM) algorithm was proposed in \cite{foschini1993simple} as shown below.

\begin{equation}\label{eq:FM algorithm}
    x_i(k+1)=(1-\lambda_i)x_i(k)+\lambda_i\gamma_i(\sum_{j\in\mathcal{N}_i}\frac{g_{ji}}{g_{ii}}x_j(k)+\frac{v_i}{g_{ii}}),
\end{equation}
where $\lambda_i\in(0,1]$. In matrix form, we write (\ref{eq:FM algorithm}) into

\begin{equation}\label{eq:FM algorithm matrix form}
\begin{split}
    x(k+1)&=(I-\Lambda H)x(k)+\Lambda\eta v(k)\\
          &=Ax(k)+Bv(k).
\end{split}
\end{equation}
where $I$ is the identity matrix, $\Lambda=diag(\lambda_i)$ and $\eta=diag(\frac{\gamma_i}{g_{ii}})$. $H$ is an $n\times n$ matrix defined by
\[
H_{ij} = 
\begin{cases}
1 ,& \text{if } i=j,\\
-\gamma_i\frac{g_{ji}}{g_{ii}}             & \text{otherwise.}
\end{cases}
\]
FM algorithm can find the smallest power vector $x$ in the element-wise sense to satisfy QoS requirement (\ref{eq:QoS}) when (\ref{eq:FM algorithm matrix form}) is stable \cite{foschini1993simple}. However, in practice, path losses $g_{ij}$ are uncertain and can fluctuate randomly due to environmental uncertainties or different antenna configurations \cite{goldsmith2005wireless}. With the recent advances in mm-wave communications, mechanically or electrically-steerable adaptive antennas are being applied in practice \cite{andrews2014will}. As a result, a more realistic model can be characterized as 
\begin{equation}
    x(k+1)=A_{s_k}x(k)+B_{s_k}v(k),
\end{equation}
where the path loss matrix $g\in\mathbb{R}^{n\times n}$ with $g_{ij}$ as defined before jumps randomly among $N$ different values based on selected antenna configurations. The switch between mode $s_i$ to $s_j$ given an antenna configuration $\sigma\in \Sigma$ is $T(s_i,\sigma,s_j)$ which is governed by an MDP $\mathcal{M}=(S,\hat{s},\Sigma,T)$ with  transition probabilities induced by action $\sigma_1$ and $\sigma_2$ are

$T_{\sigma_1}=
\begin{bmatrix}
    0.9 & 0.1  \\
    0.1 & 0.9 
\end{bmatrix}$
and
$T_{\sigma_2}=
\begin{bmatrix}
    0.3 & 0.7  \\
    0.6 & 0.4 
\end{bmatrix}$.

Therefore, the objective in this wireless network is to regulate the transmission power and guarantee the stability of the power vector by finding a policy to switch among different antenna configurations of the transmitter nodes.

The problem we consider has four nodes, which corresponds to four continuous states, and two modes. We fixed the MDP model and  repeated the example with 50 different continuous dynamics of the switched system. We report number of times that each method was able to find a solution and the average time in seconds for each method in Table~\ref{table:wireless}. We set timeout to 300 seconds. As indicated in Table~\ref{table:wireless}, the SDP method could not find a policy in most cases, as it is only able to certify the stability of the system in four cases. On the other hand, the BMI and CD method can find a solution in most of the cases, and the CD approach is faster than BMI method in average.

\begin{table}[t]
\caption{Results for the wireless network example with 50 different systems.}
\begin{center}
\scalebox{1.2}{
\begin{tabular}{lll}
\hline
    & Number of successful cases & Average Time (s) \\ \hline
BMI & 48                 & 6.23      \\ \hline
CD  & 47                 & 1.87        \\ \hline
SDP & 4                  & 0.19        \\ \hline
\end{tabular}}
\end{center}
\label{table:wireless}
\end{table}

\subsection{Linear Transportation Network}
We adapt this example from \cite{rantzer2012optimizing}. Consider a transportation network connecting four buffers as shown in Figure \ref{fig:transportation network}. 
	\begin{figure}
		\centering	
\begin{tikzpicture}[shorten >=1pt,node distance=3cm,on grid,auto, bend angle=20, thick,scale=0.7, every node/.style={transform shape}] 
				\node[state] (s0)   {$x_1$}; 
				\node[state] (s1) [right= of s0] {$x_2$}; 
				\node[state] (s2) [below = of s1]  {$x_3$}; 
				\node[state] (s3) [left= of s2] {$x_4$}; 
				\path[->]
				
				(s0) edge node {} (s2) 
				
				(s1) edge node {} (s0) 
				
				(s1) edge node  {} (s2) 
				(s2) edge  node {} (s1) 
                (s2) edge  node {} (s3) 
                (s3) edge  node {} (s2) 
				; 

				\end{tikzpicture} 
		\caption{A transportation network. Each node represents a buffer. Each arrow indicates a transportation link.}\label{fig:transportation network}
	\end{figure}
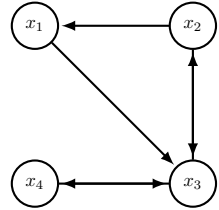
The continuous-time dynamics for this transportation network are described by $\dot{x}=Ax$, where 
\begin{align}\label{eq:transportation networks}
A =\begin{bmatrix}
  -1-l_{31} & l_{12} & 0 & 0  \\
  0 & 2-l_{12}-l_{32} & l_{23} & 0  \\
  l_{31} & l_{32} & 3-l_{23}-l_{43} & l_{34}  \\
  0 & 0 & l_{43} & -4-l_{34}
\end{bmatrix}.
\end{align}
The states $x$ represents the quantity of the contents in the buffers and $l_{ij}$ represents the rate of transfer from buffer $j$ to $i$. 

We consider the discrete time version of the model in (\ref{eq:transportation networks}) which can be obtained in a standard way \cite{antsaklis2006linear}. The sampling time is $0.1$. There are two actions that may affect the rate of transfer probabilistically which result in two different matrices $A$. This can be modeled as a switched linear system with transitions governed by an MDP $\mathcal{M}$. The objective is to guarantee the stability for each buffer. 

We discuss the results in one particular case of a network. The system we consider has two discrete modes. In the first mode, the rate of transfer for $l_{12}, l_{23}$ and $l_{31}$ are set to zero. In the second mode, the rate of transfer for $l_{32}, l_{34}$ and $l_{43}$ are set to zero. The transition probabilities induced by action $\sigma_1$ and $\sigma_2$ are

$T_{\sigma_1}=
\begin{bmatrix}
    0.5 & 0.5  \\
    0.6 & 0.4 
\end{bmatrix}$
and
$T_{\sigma_2}=
\begin{bmatrix}
    0.8 & 0.2  \\
    0.2 & 0.8 
\end{bmatrix}$.

After 5 iterations, the CD method is able to find a feasible policy that selects $\sigma_1$ in mode 1 with a probability of $0.22$, and selects $\sigma_1$ in mode with a probability of $0.13$. The spectral radius of the MJLS induced by the policy is $0.997<1$, which ensures that the MJLS is stable. The solution time for the CD method is 1.9 seconds. The BMI method converges to a solution that assigns a negative probability to $\sigma_1$ in mode 1, and therefore it is infeasible. The solution time for the BMI approach is 22.07 seconds. The SD method was infeasible, even though there exists a policy that stabilizes the MJLS.

 \begin{table}[t]
\caption{Results for the linear transportation network example with 50 different systems.}
\begin{center}
\scalebox{1.2}{
\begin{tabular}{lll}
\hline
    & Number of successful cases & Average Time (s) \\ \hline
BMI & 28                 & 19.15      \\ \hline
CD  & 41                & 6.78        \\ \hline
SDP & 3                 & 0.32        \\ \hline
\end{tabular}}
\end{center}
\label{table:transportation}
\end{table}

We show the results of three different methods in Table~\ref{table:transportation} similar to the previous examples. In this example, the MDP has 4 discrete states and two actions. We fixed the MDP model and repeated the example with 50 different continuous dynamics. We report number of times that each method was able to find a solution and the average time in seconds for each method in Table~\ref{table:transportation}. Similar to the previous example, we set timeout to 300 seconds. We note that all methods are able to find a stabilizing policy in fewer cases compared to the wireless network example. We also observe that the average time was larger compared to the previous example, even though both examples have 4 continuous states. Like previous examples, the SDP approximation does not find a stabilizing policy in almost all cases. The CD method was able to find a solution in more cases compared to solving the BMI directly, and it is faster when both BMI and CD methods were able to find a policy that stabilizes the system.

\section{Conclusion}\label{sec:Conclusion}
In this paper, we consider a switched linear system whose mode switches are governed by an Markov decision process (MDP). The objective is to find a policy in the MDP such that the switched system is guaranteed to be stable. An MDP reduces to a discrete time Markov chain with a given policy and the switched system becomes an Markov jump linear system (MJLS). So we leverage the stability conditions in MJLS and propose three different approaches to compute the stabilizing policy. Our numerical experiments show that solving for bilinear inequalities is not a practical approach for systems with large dimensions. They also show that the relaxations that has been proposed in the literature cannot certify the stability of the system in most of the cases. 

For future works, we will continue to investigate how to incorporate additional temporal logic constraint on mode switches. We will also study how to find a stabilizing policy that optimizes the cost of mode switches.   
\bibliographystyle{IEEEtran}
\bibliography{ref}

\begin{thebibliography}{10}
\providecommand{\url}[1]{#1}
\csname url@samestyle\endcsname
\providecommand{\newblock}{\relax}
\providecommand{\bibinfo}[2]{#2}
\providecommand{\BIBentrySTDinterwordspacing}{\spaceskip=0pt\relax}
\providecommand{\BIBentryALTinterwordstretchfactor}{4}
\providecommand{\BIBentryALTinterwordspacing}{\spaceskip=\fontdimen2\font plus
\BIBentryALTinterwordstretchfactor\fontdimen3\font minus
  \fontdimen4\font\relax}
\providecommand{\BIBforeignlanguage}[2]{{%
\expandafter\ifx\csname l@#1\endcsname\relax
\typeout{** WARNING: IEEEtran.bst: No hyphenation pattern has been}%
\typeout{** loaded for the language `#1'. Using the pattern for}%
\typeout{** the default language instead.}%
\else
\language=\csname l@#1\endcsname
\fi
#2}}
\providecommand{\BIBdecl}{\relax}
\BIBdecl

\bibitem{liberzon2003switching}
D.~Liberzon, \emph{Switching in systems and control}.\hskip 1em plus 0.5em
  minus 0.4em\relax Springer Science \& Business Media, 2003.

\bibitem{sun2006switched}
Z.~Sun, \emph{Switched linear systems: control and design}.\hskip 1em plus
  0.5em minus 0.4em\relax Springer Science \& Business Media, 2006.

\bibitem{sun2005analysis}
Z.~Sun and S.~S. Ge, ``Analysis and synthesis of switched linear control
  systems,'' \emph{Automatica}, vol.~41, no.~2, pp. 181--195, 2005.

\bibitem{zegers2018distributed}
F.~M. Zegers, H.-Y. Chen, P.~Deptula, and W.~E. Dixon, ``Distributed
  coordination of a multi-agent system with intermittent communication: A
  switched systems approach,'' in \emph{ASME 2018 Dynamic Systems and Control
  Conference}.\hskip 1em plus 0.5em minus 0.4em\relax American Society of
  Mechanical Engineers, 2018.

\bibitem{zhang2017energy}
D.~Zhang, P.~Shi, W.-A. Zhang, and L.~Yu, ``Energy-efficient distributed
  filtering in sensor networks: A unified switched system approach,''
  \emph{IEEE Transactions on Cybernetics}, vol.~47, no.~7, pp. 1618--1629,
  2017.

\bibitem{cetinkaya2018analysis}
A.~Cetinkaya, H.~Ishii, and T.~Hayakawa, ``Analysis of stochastic switched
  systems with application to networked control under jamming attacks,''
  \emph{IEEE Transactions on Automatic Control}, 2018.

\bibitem{wu2018privacy}
B.~Wu and H.~Lin, ``Privacy verification and enforcement via belief
  abstraction,'' \emph{IEEE control systems letters}, vol.~2, no.~4, pp.
  815--820, 2018.

\bibitem{zhang2008l_}
L.~Zhang and P.~Shi, ``$ l_2-l_\infty$ model reduction for switched lpv systems
  with average dwell time,'' \emph{IEEE Transactions on Automatic Control},
  vol.~53, no.~10, pp. 2443--2448, 2008.

\bibitem{puterman2014markov}
M.~L. Puterman, \emph{Markov decision processes: discrete stochastic dynamic
  programming}.\hskip 1em plus 0.5em minus 0.4em\relax John Wiley \& Sons,
  2014.

\bibitem{olfati2004consensus}
R.~Olfati-Saber and R.~M. Murray, ``Consensus problems in networks of agents
  with switching topology and time-delays,'' \emph{IEEE Transactions on
  automatic control}, vol.~49, no.~9, pp. 1520--1533, 2004.

\bibitem{costa2006discrete}
O.~L.~V. Costa, M.~D. Fragoso, and R.~P. Marques, \emph{Discrete-time Markov
  jump linear systems}.\hskip 1em plus 0.5em minus 0.4em\relax Springer Science
  \& Business Media, 2006.

\bibitem{bolzern2015positive}
P.~Bolzern, P.~Colaneri \emph{et~al.}, ``Positive markov jump linear systems,''
  \emph{Foundations and Trends{\textregistered} in Systems and Control},
  vol.~2, no. 3-4, pp. 275--427, 2015.

\bibitem{shi2015survey}
P.~Shi and F.~Li, ``A survey on markovian jump systems: modeling and design,''
  \emph{International Journal of Control, Automation and Systems}, vol.~13,
  no.~1, pp. 1--16, 2015.

\bibitem{saravanakumar2017stability}
R.~Saravanakumar, M.~S. Ali, C.~K. Ahn, H.~R. Karimi, and P.~Shi, ``Stability
  of markovian jump generalized neural networks with interval time-varying
  delays,'' \emph{IEEE transactions on neural networks and learning systems},
  vol.~28, no.~8, pp. 1840--1850, 2017.

\bibitem{vanantwerp2000tutorial}
J.~G. VanAntwerp and R.~D. Braatz, ``A tutorial on linear and bilinear matrix
  inequalities,'' \emph{Journal of process control}, vol.~10, no.~4, pp.
  363--385, 2000.

\bibitem{nesterov1994interior}
Y.~Nesterov and A.~Nemirovskii, \emph{Interior-point polynomial algorithms in
  convex programming}.\hskip 1em plus 0.5em minus 0.4em\relax Siam, 1994,
  vol.~13.

\bibitem{razaviyayn2013unified}
M.~Razaviyayn, M.~Hong, and Z.-Q. Luo, ``A unified convergence analysis of
  block successive minimization methods for nonsmooth optimization,''
  \emph{SIAM Journal on Optimization}, vol.~23, no.~2, pp. 1126--1153, 2013.

\bibitem{shen2017disciplined}
X.~Shen, S.~Diamond, M.~Udell, Y.~Gu, and S.~Boyd, ``Disciplined multi-convex
  programming,'' in \emph{2017 29th Chinese Control And Decision Conference
  (CCDC)}.\hskip 1em plus 0.5em minus 0.4em\relax IEEE, 2017, pp. 895--900.

\bibitem{lin2009stability}
H.~Lin and P.~J. Antsaklis, ``Stability and stabilizability of switched linear
  systems: a survey of recent results,'' \emph{IEEE Transactions on Automatic
  control}, vol.~54, no.~2, pp. 308--322, 2009.

\bibitem{Boyd}
S.~Boyd and L.~Vandenberghe, \emph{Convex Optimization}.\hskip 1em plus 0.5em
  minus 0.4em\relax Cambridge University Press, 2004.

\bibitem{baier2008principles}
C.~Baier and J.-P. Katoen, \emph{Principles of model checking}, 2008.

\bibitem{xu2013block}
Y.~Xu and W.~Yin, ``A block coordinate descent method for regularized
  multiconvex optimization with applications to nonnegative tensor
  factorization and completion,'' \emph{SIAM Journal on imaging sciences},
  vol.~6, no.~3, pp. 1758--1789, 2013.

\bibitem{mosek}
\BIBentryALTinterwordspacing
M.~ApS, \emph{The MOSEK optimization toolbox for MATLAB manual. Version 8.1.},
  2017. [Online]. Available: \url{http://docs.mosek.com/8.1/toolbox/index.html}
\BIBentrySTDinterwordspacing

\bibitem{fiala2013penlab}
J.~Fiala, M.~Ko{\v{c}}vara, and M.~Stingl, ``Penlab: A matlab solver for
  nonlinear semidefinite optimization,'' \emph{arXiv preprint arXiv:1311.5240},
  2013.

\bibitem{cvx}
I.~CVX~Research, ``{CVX}: Matlab software for disciplined convex programming,
  version 2.0,'' \url{http://cvxr.com/cvx}, Aug. 2012.

\bibitem{foschini1993simple}
G.~J. Foschini and Z.~Miljanic, ``A simple distributed autonomous power control
  algorithm and its convergence,'' \emph{IEEE transactions on vehicular
  Technology}, vol.~42, no.~4, pp. 641--646, 1993.

\bibitem{goldsmith2005wireless}
A.~Goldsmith, \emph{Wireless communications}.\hskip 1em plus 0.5em minus
  0.4em\relax Cambridge university press, 2005.

\bibitem{andrews2014will}
J.~G. Andrews, S.~Buzzi, W.~Choi, S.~V. Hanly, A.~Lozano, A.~C. Soong, and
  J.~C. Zhang, ``What will 5g be?'' \emph{IEEE Journal on selected areas in
  communications}, vol.~32, no.~6, pp. 1065--1082, 2014.

\bibitem{rantzer2012optimizing}
A.~Rantzer, ``Optimizing positively dominated systems,'' in \emph{2012 IEEE
  51st IEEE Conference on Decision and Control (CDC)}.\hskip 1em plus 0.5em
  minus 0.4em\relax IEEE, 2012, pp. 272--277.

\bibitem{antsaklis2006linear}
P.~J. Antsaklis and A.~N. Michel, \emph{Linear systems}.\hskip 1em plus 0.5em
  minus 0.4em\relax Springer Science \& Business Media, 2006.

\end{thebibliography}
\end{document}